\documentclass[12pt, twoside]{article}

\usepackage{amsmath,amsthm,amssymb}

\usepackage{times}

\usepackage{enumerate}

\pagestyle{myheadings}
 \usepackage{amsmath}
\usepackage{amssymb}
\usepackage{amsthm}
\usepackage{graphicx}
\usepackage{amscd}
\usepackage{graphicx}

\usepackage{amssymb}
\usepackage{amsmath}
\usepackage{latexsym}
\usepackage[numbers,sort&compress]{natbib}
\usepackage{verbatim}
\usepackage{physics}
\usepackage{dsfont}	
\usepackage{microtype}  
\usepackage{hyperref}
\usepackage{tikz}
\usepackage{booktabs}   

\usepackage{color}

 \textwidth 160mm
 \textheight 210mm
 \topmargin 0cm
 \oddsidemargin 0cm
 \evensidemargin 0cm
 \parskip 2mm
  \setlength{\parindent}{0pt}

\date{}

\newtheorem{theorem}{Theorem}

\title{A polynomial time reduction from the multi-graph isomorphism problem to additive code equivalence}

 \begin{document}
\author{Simeon Ball\thanks{30 August 2021. The first author acknowledges the support of the Spanish Ministry of Science and Innovation grant MTM2017-82166-P.} \ and James Dixon}
\maketitle

\begin{abstract}
We present a polynomial time reduction from the multi-graph isomorphism problem to the problem of code equivalence of additive codes over finite extensions of ${\mathbb F}_2$.
\end{abstract}

\section{Introduction}
Let $F$ be a finite field of characteristic $p$, where $p$ is a prime. An {\em additive code} of length $n$ over $F$ is a subset of $F^n$ with the property that for all $u,v \in C$, we have that $u+v \in C$. It is easy to prove that an additive code over $F$ is linear over ${\mathbb F}_p$, the finite field with $p$ elements. Thus, an additive code over $F$ can be defined as the row space over ${\mathbb F}_p$ of a matrix whose elements are from $F$. The code equivalence problem for additive codes is the following. Given two $k \times n$ matrices over $F$, when is there a permutation of the columns of one of the matrices, together with a permutation $\sigma_i$, for $i \in \{1,\ldots,n\}$, which is a permutation of the elements of $F$ in the $i$-th coordinate, so that the row-spaces over ${\mathbb F}_p$ are the same. If $F={\mathbb F}_p$ then an additive code is linear and if, furthermore, $p=2$ then the code is a binary linear code. If the permitations $\sigma_i$ are all additive then the codes are {\em additively equivalent}. It is not known if equivalent additive codes are necessarily additively equivalent, see \cite{BD2021}.

A multi-graph is a graph whose edges are assigned a weight from $\{1,\ldots,h\}$, for some natural number $h$. An edge joining vertices $u$ and $v$ of weight $w$ is interpreted as $w$ edges joining $u$ and $v$. The multi-graph isomorphism problem is the following. Given two multi-graphs determine if there a bijection between the set of vertices which induces a bijection on the edges. i.e. edges of weight $w$ are mapped to edges of weight $w$. If $h=1$ then a  multi-graph is simply a graph.

In \cite{PR1997}, Petrank and Roth provide a polynomial time reduction from the graph isomorphism problem to the binary linear code equivalence problem. The graph isomorphism problem is in NP but is not known to be NP-complete. The problem is not known to be solvable in polynomial time either and is therefore a good candidate to belong to the computational complexity class NP-intermediate. 
Known algorithms for graph isomorphism include McKay's Nauty algorithm \cite{McKay1981}, Ullmann's algorithm \cite{Ullmann1976}, the VF2 algorithm \cite{CFSV2004} and the parameterised matching algorithm \cite{MKEHHP2013}. All these algorithms have exponential worst case performance. Solving isomorphism generally takes much longer time if there is no match, since all possible mappings are eventually searched until it is shown that there is no isomorphism. The latter extends to multi-graph isomorphism and is based on a parameterised sequence which is a walk that covers every vertex in the graph.

In this note we extend the polynomial time reduction of Petrank and Roth \cite{PR1997} to a polynomial time reduction of multi-graph isomorphism to additive code equivalence, where the code is over an extension of ${\mathbb F}_{2}$. Since a graph on $n$ vertices has at most $\frac{1}{2}n(n-1)$ edges, we assume that the multi-graph has $h \leqslant \frac{1}{2}n(n-1)$ weights and that these are from the set $\{1,\ldots, h\}$.

\section{The reduction of multi-graph isomorphism to additive code equivalence}

Let $\Gamma$ be a multi-graph with vertex set $V$, whose edges have weights belonging to the set $\{1,\ldots,h\}$. Let $A$ be the incidence matrix of $\Gamma$, whose rows are indexed by the edges, whose columns are indexed by the vertices and where the edge-vertex entry is equal to the weight of the edge if the edge is incident with the vertex and zero otherwise. Let $E$ denote the set of edges of $\Gamma$ and let $E_i$ be the subset of $E$ of edges of weight at least $i$ (so $E_1=E$).

Let $e$ be a primitive element of ${\mathbb F}_{2^r}$, where $r>\log_2 h$. Let $\mathrm{D}_i$ be the $|E_i| \times |E_i|$ diagonal matrix indexed by the edges of $E_i$ whose diagonal entry is $e^{j-1}$, where $j$ is the weight of the edge indexing the row of $\mathrm{D}_i$.

Let
$$
N=(h+2)|E_1|+|E_2|+\cdots+|E_{h}|+|V|.
$$
We map $\Gamma$ to the additive code which is the ${\mathbb F}_2$-row span of the $|E| \times N$ matrix
$$
\mathrm{G}(\Gamma)=\left( \underbrace{D_1 \ | \ D_1 \ | \ \cdots \ | \ D_1}_{h+2} \ | \ \begin{array}{c} O_2 \\ D_2 \end{array}
\ | \ \begin{array}{c} O_3 \\ D_3  \end{array} \ | \ \cdots \ | \ \begin{array}{c} O_{h} \\ D_{h}  \end{array}
\ | \ A
\right)
$$
where $O_i$ is a matrix of zeros, whose dimensions are determined by the fact that the matrix $\mathrm{G}(\Gamma)$ is a $|E| \times N$ matrix and the matrix $\mathrm{D}_i$ is a $|E_i| \times |E_i|$ matrix.

The following two observations will be vital.
\begin{enumerate}
\item[(O1)]
The row of $G(\Gamma)$ indexed by an edge of weight $i$ is a codeword of weight $h+3+i$ whose coordinates are either $0$ or $e^{i-1}$.
\item[(O2)]
The only codewords of weight at most $2h+3$ are rows of $G(\Gamma)$.
\end{enumerate}

The following is the main theorem of this note.

\begin{theorem} \label{mainthm}
The multi-graphs $\Gamma$ and $\Gamma'$ are isomorphic if and only if the additive codes generated by $\mathrm{G}(\Gamma)$ and $\mathrm{G}(\Gamma')$ are equivalent.
\end{theorem}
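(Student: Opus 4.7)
The forward direction is routine: a multi-graph isomorphism $\phi_V\colon V\to V'$ induces a weight-preserving edge bijection $\phi_E\colon E\to E'$, and permuting the rows of $\mathrm{G}(\Gamma)$ by $\phi_E$, the vertex columns of the $A$-block by $\phi_V$, and the edge columns in each $D_j$-block (including every one of the $h+2$ copies of $D_1$) by $\phi_E$ carries $\mathrm{G}(\Gamma)$ exactly to $\mathrm{G}(\Gamma')$, giving code equivalence with all $\sigma_i$ trivial.

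For the converse I would assume that $\mathrm{G}(\Gamma)$ and $\mathrm{G}(\Gamma')$ generate equivalent additive codes via a column permutation $\pi$ and field permutations $\sigma_1,\dots,\sigma_N$ fixing $0$ (the standard weight-preserving convention). Because each $\sigma_i$ is a bijection fixing $0$, the equivalence preserves Hamming weight and the column support of every codeword up to $\pi$. By observation (O2), the codewords of weight at most $2h+3$ are exactly the rows of $\mathrm{G}(\Gamma)$ (resp.\ of $\mathrm{G}(\Gamma')$), so the equivalence restricts to a bijection on rows; by (O1), a row of weight $h+3+i$ must map to a row of weight $h+3+i$, giving a weight-preserving edge bijection $\phi_E\colon E\to E'$.

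Next I would recover $\phi_V$ from the column supports. Viewing the support of a column as a subset of $E$, the support of the $A$-column indexed by $v$ is $I(v)$, the set of edges incident with $v$, whereas every column in a diagonal block has support of size exactly $1$. Since $\pi$ intertwines column supports with $\phi_E$, any $A$-column with $\deg(v)\ge 2$ must be sent to an $A$-column of $\mathrm{G}(\Gamma')$ indexed by some $v'\in V'$ with $I(v')=\phi_E(I(v))$. Setting $\phi_V(v)=v'$ defines a bijection on vertices of degree at least $2$, and by construction $\phi_E$ sends any edge incident with $v$ to an edge incident with $\phi_V(v)$.

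The step I expect to be the main obstacle is extending $\phi_V$ to leaves and isolated vertices, because their $A$-columns have support of size at most $1$ and so cannot be distinguished from diagonal-block columns by support alone. I would overcome this by counting: for each edge $\epsilon$ of weight $w_\epsilon$, the number of columns of $\mathrm{G}(\Gamma)$ with support $\{\epsilon\}$ is $(h+2)+(w_\epsilon-1)+|L(\epsilon)|=h+1+w_\epsilon+|L(\epsilon)|$, where $L(\epsilon)$ is the set of leaf endpoints of $\epsilon$. The bijection $\pi$ matches this count with the corresponding count for $\phi_E(\epsilon)$ in $\mathrm{G}(\Gamma')$, and since $\phi_E$ preserves edge weight this forces $|L(\epsilon)|=|L'(\phi_E(\epsilon))|$. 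I would then pair leaves within each $L(\epsilon)$ bijectively with leaves within $L'(\phi_E(\epsilon))$, match isolated vertices via the all-zero columns (whose total number is preserved by $\pi$), and finish with a short case analysis on the two endpoints of each edge $\epsilon=\{u,v\}$ (both ends of degree $\ge 2$; one leaf and one not; both leaves) to verify $\phi_E(\epsilon)=\{\phi_V(u),\phi_V(v)\}$, yielding the required multi-graph isomorphism.
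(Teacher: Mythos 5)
Your proposal is correct and follows essentially the same route as the paper: use (O1) and (O2) to force the weight-preserving equivalence to send rows to rows (giving the weight-preserving edge bijection), and then recover the vertex correspondence from column supports together with the multiplicity count of weight-one columns per edge ($h+1+w_\epsilon$ coming from the diagonal blocks). Your explicit treatment of leaves and isolated vertices fills in a degenerate case that the paper handles only implicitly (identical weight-one columns can be interchanged harmlessly), but it is a refinement of the same argument rather than a different approach.
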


\begin{proof}

Suppose the multi-graphs $\Gamma$ and $\Gamma'$ are isomorphic. Then there is a permutation of the columns of $\mathrm{A}$ and a permutation of the rows of $\mathrm{A}$, which gives $A'$, the incidence matrix of the multi-graph $\Gamma'$. Apply the column permutation to the last $|V|$ columns of $\Gamma$ and the permutation of the rows of $\mathrm{A}$ to the rows of $\mathrm{G}(\Gamma)$. Since the permutation of the rows of $\mathrm{A}$, takes edges of weight $i$ to edges of weight $i$, it takes codewords of weight $h+3+i$, which are rows of $G(\Gamma)$, to codewords of weight $h+3+i$, which are rows of $G(\Gamma')$ by (O1) and (O2). Thus, $G(\Gamma)$ and $G(\Gamma')$ generate equivalent codes.

Now suppose the additive codes generated by $\mathrm{G}(\Gamma)$ and $\mathrm{G}(\Gamma')$ are equivalent. Then, by definition, there is an $N \times N$ permutation matrix $P$, a $|E| \times |E|$ change of basis matrix $S$ and permutations $\sigma_i$, $i\in \{1,\ldots,N\}$, such that we can apply $P$ to the columns of $\mathrm{G}(\Gamma)$, $S$ to the rows of $\mathrm{G}(\Gamma)$ and the permutation $\sigma_i$ to the $i$-th coordinate and obtain a matrix whose generates, over ${\mathbb F}_2$, the same code as $\mathrm{G}(\Gamma')$.

Properties (O1) and (O2) imply that the change of basis matrix $S$ is in fact a permutation matrix, since rows of $\mathrm{G}(\Gamma)$ must be mapped to rows of $\mathrm{G}(\Gamma')$. 
Furthermore, $S$ maps codewords of weight $h+3+i$ to codewords of weight $h+3+i$, so the initial $N-|V|$ columns of $SG$ contain each vector of weight one, with a non-zero coordinate indexed by an edge of weight $i$, precisely $h+1+i$ times. Hence, the first $N-|V|$ columns of $G'$ can be obtained by permuting the first $N-|V|$ columns of $SG$. This permutation of the columns then maps the columns of $A$ to columns of $A'$. 

The permutation $\sigma_i$ must fix all elements of $F$ appearing in the $i$-th column of a column of $A$, since by property (O1) the rows of $\mathrm{G}(\Gamma')$ of weight $h+3+i$ have coordinates $0$ or $e^{i-1}$.

Hence, we obtain $\mathrm{G}(\Gamma')$ from $\mathrm{G}(\Gamma)$ by applying a permutation of the columns of $A$ and a permutation of the rows of $A$. Thus, $\Gamma$ and $\Gamma'$ are isomorphic.
 \end{proof}

The polynomial time reduction follows from Theorem~\ref{mainthm}, by noting that $|N|<chn^2<n^4$, for some constant $c$. 
Note also that we can replace equivalence with additive equivalence, since the permutations $\sigma_i$ are always trivial.

It is interesting to ask if $N$ could be decreased, possibly by increasing the size of the field extension. For $h=2$ it is possible to reduce $N$ to $3|E_1|+|E_2|+|V|$ by simply deleting one of the initial $D_1$ matrices. One can check that the proof still works, although a little more subtlety is required. Observe that the sum of two codewords of weight $5$ (corresponding to edges of weight one) cannot produce a codeword of weight $6$ since this would require a repeated single edge, which is an edge of weight two.

\section{Code equivalence algorithms}

We discussed known algorithms for solving the graph isomorphism problem in the introduction. Algorithms for solving binary linear code equivalence include Bouyukliev's algorithm \cite{Bouyukliev2007}, which is similar to McKay's graph isomorphism algorithm \cite{McKay1981}, Leon's algorithm \cite{Leon1982} and the support splitting algorithm of Sendrier \cite{Sendrier1999}, \cite{Sendrier2000}. The latter does not apply to all binary linear codes but it is interesting because it is fast for binary linear codes in which the dimension of the hull $H(C)=C \cap C^{\perp}$ is small. It seems that this is the case for codes obtained from the polynomial time reduction of the graph isomorphism problem due to Petrank and Roth \cite{PR1997}. This warrants further investigation.

The support splitting algorithm allows one to determine signatures from the weight distribution of the hull of truncated binary linear codes, truncating one coordinate at a time.  This works because one of the hulls of either the truncated code or the truncations of its dual is non-trivial. This can be seen as follows. Let $C_i$ be the code obtained from $C$ be setting the $i$-th coordinate to zero in all codewords. Suppose that $H(C)=\{0\}$. Thus, ${\mathbb F}_2^n=C\oplus C^{\perp}$.

Without loss of generality, we consider a truncation on the first coordinate. We have that 
$$
(1,0,\ldots,0)=u+v,
$$
for some $u \in C$ and $v \in C^{\perp}$. 

If $(u_1,v_1)=(1,0)$ then, since $v_1=0$, we have that $v \in (C_1)^{\perp}$ and since $v= (1,0,\ldots,0)+u$, we have that $v \in C_1$. Hence, $v \in H(C_1)$.

Similarly, if $(u_1,v_1)=(0,1)$ then, since $u_1=0$, we have that $u \in ((C^{\perp})_1)^{\perp}$ and since $u= (1,0,\ldots,0)+v$, we have that $u \in (C^{\perp})_1$. Hence, $u \in H((C^{\perp})_1)$.

This truncation trick does not carry over to additive codes. It is possible that both $H((C^{\perp})_1)$ and $H(C_1)$ are trivial. However, if we are interested in establishing additive equivalence then we can employ a slightly modified support splitting algorithm; instead of removing the $i$-th coordinate we take a subspace. In effect, truncating in the binary case would be equivalent to taking the subspace $\{0\}$. This is done in the following way. The $i$-th column of a generator matrix for $C$ is a vector $v_i \in {\mathbb F}_{2^r}^k$, where $|C|=2^k$. Writing $v_i$ out over the basis $\{1,\alpha,\ldots,\alpha^{r-1}\}$ of ${\mathbb F}_{2^r}$ over ${\mathbb F}_2$,
$$
v_i=\sum_{j=0}^{r-1} v_i^{(j)}\alpha^j,
$$
where $v_i^{(j)} \in {\mathbb F}_{2}^k$. Let $\rho_i$ be the subspace of  ${\mathbb F}_{2}^k$, of dimension at most $r$, spanned by $v_i^{(0)},\ldots, v_i^{(r-1)}$. The subspace $\rho_i$ is unaffected by an additive permutation on the $i$-th coordinate, which only has the effect of changing the basis. Thus, we can select a subspace $\pi_i$ of $\rho_i$ and replace the $i$-th column of the generator matrix with the corresponding vector of ${\mathbb F}_{2^r}^k$ obtained from the subspace $\pi_i$ by reversing the construction of $\rho_i$ above. This code we denote by $C_{i,\pi_i}$, where $i$ is the selected coordinate and $\pi_i$ is the subspace. For equivalent codes, we have that selecting a coordinate and a subspace in $C$, there must be a corresponding coordinate and subspace in $C'$ which is equivalent, i.e. $C_{i,\pi_i}$ is equivalent to $C'_{i',\pi'_{i'}}$, for some $i'$ and $\pi'_{i'}$. The weight distribution of the hull of $C_{i,\pi_i}$ can then be used as a signature with an aim of establishing the permutation taking $C$ to $C'$, as in the support splitting algorithm.

\vspace{1cm}

   Simeon Ball\\
   Departament de Matem\`atiques, \\
Universitat Polit\`ecnica de Catalunya, \\
Carrer Jordi Girona 1-3,\\
08034 Barcelona, Spain \\
   {\tt simeon.michael.ball@upc.edu} \\

James Dixon\\
Facultat de Matem\`atiques, \\
Universitat Polit\`ecnica de Catalunya, \\
Carrer de Pau Gargallo, 14, \\
08028 Barcelona, Spain\\
  {\tt james.dixon@estudiantat.upc.edu} \\


\begin{thebibliography}{}

\bibitem{BD2021} S. Ball and J. Dixon, The equivalence of linear codes implies semi-linear equivalence, preprint (2021). \url{arXiv:2107.07912}.

\bibitem{Bouyukliev2007} I. Bouyukliev, About the code equivalence. in: {\it Advances in coding theory and cryptography}, {\it Ser. Coding Theory Cryptol.}, {\bf 3} (2007) 126--151.

\bibitem {CFSV2004} L. P. Cordella, P. Foggia, C. Sansone, M. Vento, A (sub) graph isomorphism algorithm for matching large graphs, {\it  IEEE Transactions on Pattern Analysis and Machine Intelligence}, {\bf 26} (2004) 1367--1372.

\bibitem{Leon1982} J. Leon, Computing automorphism groups of error-correcting codes, {\it IEEE Trans. Inform. Theory}, {\bf 28} (1982) 496-511.

\bibitem{McKay1981} B. D. McKay, Practical graph isomorphism, {\em Congressus Numerantium}, {\bf 30} (1981) 45-87.


\bibitem{MKEHHP2013} J. Mendivelso, S. Kim, S. Elnikety, Y. He, S. Hwang, Y. Pinz\' on Y,  Solving Graph Isomorphism Using Parameterized Matching. In: Kurland O., Lewenstein M., Porat E. (eds) {\it String Processing and Information Retrieval. SPIRE 2013}. Lecture Notes in Computer Science, vol {\bf 8214}. Springer, Cham, 2013. \url{https://doi.org/10.1007/978-3-319-02432-5_26}

\bibitem{PR1997} E. Petrank and R. M. Roth, 
Is code equivalence easy to decide?, {\it IEEE Trans. Inform. Theory}, {\bf 43} (1997) 1602--1604. 

\bibitem{Sendrier1999}  N. Sendrier, The support splitting algorithm, Research report no 3637, 33 pages 1999.

\bibitem{Sendrier2000} N. Sendrier, Finding the permutation between equivalent linear codes: the support splitting algorithm, {\it IEEE Trans. Inform. Theory}, {\bf 46} (2000), 1193--1203.

\bibitem{Ullmann1976} J. R. Ullmann, An algorithm for subgraph isomorphism. {\it Journal of the ACM}, {\bf 23} (1976) 31--42.

\end{thebibliography}
\end{document}